\DeclareMathOperator{\ad}{ad}
\DeclareMathOperator{\Ad}{Ad}
\DeclareMathOperator{\rank}{rank}
\DeclareMathOperator{\trace}{trace}
\newcommand{\so}{\mathfrak{so}}
\newcommand{\su}{\mathfrak{su}}
\theoremstyle{plain}
\newtheorem{theorem}{Theorem}
\numberwithin{theorem}{section}
\newtheorem{proposition}[theorem]{Proposition}
\newtheorem{lemma}[theorem]{Lemma}
\theoremstyle{definition}
\newtheorem{example}[theorem]{Example}
\theoremstyle{remark}
\newtheorem{remark}[theorem]{Remark}
\numberwithin{equation}{section}
\begin{document}
\title{Manifolds admitting a metric with co-index of symmetry $4$}

\author{Silvio Reggiani}
\address{CONICET and Universidad Nacional de Rosario, ECEN-FCEIA,
  Departamento de Ma\-te\-má\-ti\-ca. Av. Pellegrini 250, 2000
  Rosario, Argentina.}
\email{\href{mailto:reggiani@fceia.unr.edu.ar}{reggiani@fceia.unr.edu.ar}}
\urladdr{\url{http://www.fceia.unr.edu.ar/~reggiani}}

\date{\today}

\thanks{Supported by CONICET. Partially supported by ANPCyT and SeCyT-UNR}

\keywords{Compact homogeneous manifolds, symmetric spaces, index of symmetry, distribution of symmetry, co-index of symmetry}

\subjclass[2010]{53C30, 53C35}

\maketitle

\begin{abstract}
  By a recent result, it is known that compact homogeneous spaces with
  co-index of symmetry $4$ are quotients of a semisimple Lie group of
  dimension at most $10$. In this paper we determine exactly which
  ones of these spaces actually admit such a metric. For all the
  admissible spaces we construct explicit examples of these metrics.
\end{abstract}

\section{Introduction}

The problem of classifying the $G$-invariant Riemannian metrics on a
given homogeneous manifold $M = G/H$ is a difficult one. Even in the
case $M = G/\{e\}$ of a Lie group with a left invariant metric, this
problem is far from being solved. What makes more sense is to impose
some geometric constrains and restrict ourselves to a more manageable
class. For instance, we know exactly which Lie groups admit a
bi-invariant metric, and how a bi-invariant metric looks like in such
a group. More generally, if we ask for parallel tensor curvature, we
end up with Cartan's classification of the symmetric spaces
\cite{cartan-1926-1927}.

One possible way to approach this general problem, is by trying to
classify homogeneous spaces according to their index of symmetry,
first introduced in \cite{olmos-reggiani-tamaru-2014}. This proves to
be a fruitful way to address the issue, leading to very interesting
examples and strong structure results. Let us say quickly that the
index of symmetry is a geometric invariant, which measures how far is
a homogeneous Riemannian manifold from being a symmetric space. More 
precisely, the index of symmetry of a homogeneous Riemannian manifold
$M = G/H$ can be defined as the maximum number $i_{\mathfrak s}(M)$
of linearly independent Killing fields which are parallel at a given
point of $M$. Associated to this concept there is a $G$-invariant
distribution on $M$ called the distribution of symmetry, whose rank
equals $i_{\mathfrak s}(M)$, which is integrable with totally geodesic
leaves. Moreover, the leaves of the distribution of symmetry are
isometric to a globally symmetric space, called the leaf of symmetry
of $M$. The distribution of symmetry was computed for compact
naturally reductive spaces in \cite{olmos-reggiani-tamaru-2014} and
for naturally reductive nilpotent Lie groups in
\cite{reggiani18_distr_symmet_natur_reduc_nilpot_lie_group}. In
\cite{podesta-2015}, Podestá computed the index of symmetry for Kähler
metrics on generalized flag manifolds, showing that the leaf of
symmetry is a Hermitian symmetric space. There is also a
classification of left invariant metrics on $3$-dimensional unimodular
Lie groups according to their index of symmetry
\cite{Reggiani_2018}. Although there is some work in the non compact
setting, the most important structure results related to the index of
symmetry appear almost exclusively in the compact case (mainly
because of the existence of a bi-invariant metric on the full isometry
group). In particular, in the work \cite{berndt-olmos-reggiani-2017}
the classification of compact homogeneous spaces with co-index of
symmetry less or equal than $3$ is given (the co-index of symmetry of
$M$ is $\dim M - i_{\mathfrak s}(M)$). Namely, there are no spaces
with co-index $1$ (this is also the case for non compact spaces
according to \cite{Reggiani_2018}); all spaces with co-index of
symmetry $2$ are covered by $SU(2)$ with certain left invariant
metrics; and the spaces with co-index $3$ arise as certain
$SO(4)$-invariant metrics on $SO(4)/SO(2)$ (for the standard inclusion
of $SO(2)$ into $SO(4)$). In particular, in these cases the underlying
manifold supporting such metrics is the same, up to a cover.  These
results rely on a more general theorem proved in
\cite{berndt-olmos-reggiani-2017} which gives a bound on the dimension
of $M$ in terms  of its co-index of symmetry. More precisely, if $M$
is compact  homogeneous (without symmetric factors) of co-index of
symmetry $k$, then there exists a transitive semisimple Lie group $G'$
such that  
\begin{equation}\label{eq:3}
  \dim G' \le \frac{k(k + 1)}2.
\end{equation}

This is the reason why in the above cases there is only one possible
space admitting such metrics. The next logical step is to study spaces
with co-index of symmetry $4$. But in this case the situation is more
complicated, as there are several possibilities for the group
$G'$. The goal of this paper is to determine which homogeneous spaces
$G'/H'$, with $G'$ as in (\ref{eq:3}), admit a metric of co-index of
symmetry $k = 4$. By a simple inspection one can easily derive a list
of all the spaces $G'/H'$ which could admit a metric of co-index
$4$. Actually the list is somewhat shorter than one expects, as in
the extreme case where $\dim G' = 10$, the isotropy group must have
positive dimension. From this list we can exclude the spaces
$SO(5)/SO(2)$ and $SO(5)/(SO(3) \times SO(2))$. In order to do that,
we need to study the isotropy representation and the transvection
group of the possible leaf of symmetry (which have dimension $5$ and
$2$ respectively). For all the remaining cases we give explicit
metrics with co-index $4$. Some families of examples are constructed
from the classification given in \cite{berndt-olmos-reggiani-2017} for
co-index $3$ and the classification of naturally reductive spaces of
dimension $6$ \cite{agricola-ferreira-friedrich-2014}. Another
argument used in the construction of the metrics comes from the
so-called double symmetric pairs $G_1 \supset G_2 \supset G_3$, where
$G_1/G_2$ and $G_2/G_3$ are symmetric pairs. This trick is used in
\cite{olmos-reggiani-tamaru-2014}, where perturbing the normal
homogeneous metric on $G_1/G_3$, one sometimes gets a metric with leaf
of symmetry $G_2/G_3$. This argument does not always work, as one has
to prove every time that the proposed metric is not symmetric. Some
examples of this were known, but we can give a new one associated with
double symmetric pair $SO(5) \supset SO(4) \supset SO(2) \times
SO(2)$. Here the leaf of symmetry is a product of spheres.

Finally, we want to point out that the case $G' = SO(5)$ and $H' =
SO(3)$ provides examples of metric of co-index $4$, but only for the
standard inclusion of $SO(3)$ in $SO(5)$. In order to prove that, we
use an argument similar to that used in
\cite{berndt-olmos-reggiani-2017} for $SO(4)/SO(2)$, which involves
the so-called strongly symmetric autoparallel distributions (See
Theorem~\ref{sec:case-so5so3-1}).

\section{Preliminaries}

We use this section to fix some notation and review the structure
theory concerning the index of symmetry of a compact homogeneous
space. The main references for this section are
\cite{olmos-reggiani-tamaru-2014} and
\cite{berndt-olmos-reggiani-2017}. Let $M = G/H$ be a compact
homogeneous space, where $G = I(M)$ is the full isometry group of
$M$. Let $\mathfrak g$ be the Lie algebra of $G$, which is naturally
identified with the algebra $\mathfrak K(M)$ of Killing vector fields
on $M$. We also denote by $\mathfrak h$ the Lie algebra of the full
isotropy group $H$. Given $q \in M$, we define the \emph{Cartan
  subspace} at $q$ as   
\begin{equation*}
  \mathfrak p^q = \{X \in \mathfrak g: (\nabla X)_q = 0\},
\end{equation*}
where $\nabla$ is the Levi-Civita connection of $M$. The elements in
$\mathfrak p^q$ are called \emph{transvections} at $q$. The
\emph{symmetric isotropy algebra} at $q$ is defined by
\begin{equation*}
  \mathfrak k^q = \operatorname{span}_{\mathbb R}\{[X, Y]: X, Y \in
  \mathfrak p^q\}.
\end{equation*}

It is easy to see that $\mathfrak k^q$ is contained in $\mathfrak
h$. Let us define 
\begin{equation*}
  \mathfrak g^q = \mathfrak k^q \oplus \mathfrak p^q,
\end{equation*}
which is an involutive subalgebra of $\mathfrak g$. We denote by $G^q$
the connected Lie subgroup of $G$ with Lie algebra $\mathfrak g^q$. The
\emph{distribution of symmetry} $\mathfrak s$ of $M$ is defined by 
\begin{equation*} 
  q \mapsto \mathfrak s_q = \{X_q: X \in \mathfrak p^q\}
\end{equation*}
and it is a $G$-invariant autoparallel distribution of $M$ (that is,
integrable with totally geodesic leaves). The rank $i_{\mathfrak
  s}(M)$ of the distribution $\mathfrak s$ is known as the \emph{index
  of symmetry} of $M$, and the co-index of symmetry of $M$ is defined
as $ci_{\mathfrak s}(M) = \dim M - i_{\mathfrak s}(M)$. The integral
manifold $L(q)$ of $\mathfrak s$ by $q$ is a totally geodesic
submanifold of $M$, and moreover, it is extrinsically a globally
symmetric space. The leaves of the distribution of symmetry form a
foliation $\mathcal L$ on $M$ called the \emph{foliation of symmetry}
of $M$. Since all the leaves of the foliation of symmetry are
isometric, we will refer to $L(q)$ as the \emph{leaf of symmetry} of
$M$. Let us denote
\begin{equation*}
  \mathfrak g^{\mathfrak s} = \{X \in \mathfrak g: X \in \mathfrak
  s\}, 
\end{equation*}
which is an ideal of $\mathfrak g$ and let $G^{\mathfrak s}$ be the
corresponding normal subgroup of $G$.

\begin{remark}
  The following facts hold (see \cite{berndt-olmos-reggiani-2017}).
  \begin{enumerate}
  \item The groups $G^{\mathfrak s}$ and $G^q$ act almost effectively
    on the leaf of symmetry $L(q)$.
  \item If $\bar G^q = \{g|_{L(q)}: g \in G^q\}$ and $\bar K^q = \{h|_{L(q)}:
    h \in H\}$, then the Lie algebra of $\bar K^q$ is $\mathfrak k^q$
    (restricted to the leaf of symmetry) and $G^q/K^q$ is a symmetric
    presentation for $L(q)$. 
  \end{enumerate}
\end{remark}

The most important general result for compact homogeneous spaces
related to these topics is the following theorem. 

\begin{theorem}[\cite{berndt-olmos-reggiani-2017}]
  \label{sec:preliminaries}
  Let $M = G/H$ a compact, simply connected homogeneous Riemannian
  space with $G = I(M)$ and co-index of symmetry $k$. Assume that $M$
  does not split of a symmetric de Rham factor. Then $k \ge 2$ and
  there exists a Lie group $G'$ with the following properties.
  \begin{enumerate}
  \item $G'$ is a semisimple normal subgroup of $G$.
  \item $G'$ is transitive on $M$.
  \item $\mathfrak g = \mathfrak g^{\mathfrak s} \oplus \mathfrak g'$
    (direct sum of ideals), where $\mathfrak g'$ is the Lie algebra of
    $G'$.
  \item\label{item:1} $\dim G' \le k(k + 1)/2$.
  \item If $\dim G' = k(k + 1)/2$ then the universal cover of $G'$ is
    $Spin(k + 1)$.
  \item If $k \ge 3$ and $\dim G' = k(k + 1)/2$, the isotropy group of
    $G'$ has positive dimension.
  \end{enumerate}
\end{theorem}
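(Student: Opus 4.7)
The plan is to extract $G'$ as the complementary ideal to $\mathfrak g^{\mathfrak s}$ inside the compact reductive algebra $\mathfrak g$, use commutation plus the no-symmetric-factor hypothesis to force transitivity, and then squeeze $\dim G'$ via the isotropy action on the normal bundle to the foliation of symmetry.

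Since $G = I(M)$ is compact, I would fix a bi-invariant inner product $Q$ on $\mathfrak g$. Because $\mathfrak g^{\mathfrak s}$ is an ideal, its $Q$-orthogonal complement $\mathfrak g'$ is also an ideal, yielding the decomposition~(3); let $G'$ be the associated connected normal subgroup. For~(1), the hypothesis that $M$ has no symmetric de~Rham factor (in particular no flat factor) forces the center of $\mathfrak g$ to be trivial, so $\mathfrak g$ is semisimple, and hence so is its ideal $\mathfrak g'$.

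For the transitivity~(2), the delicate step where the no-symmetric-factor hypothesis really bites, I would argue by contradiction. If the $G'$-orbit $N$ through $q$ were a proper submanifold of $M$, the commutation of $G^{\mathfrak s}$ with $G'$ combined with their complementary ideal structure would endow a neighborhood of $q$ with a commuting product structure locally isometric to $L(q)\times N_0$; simple connectedness of $M$ and $G$-invariance of the metric would promote this local splitting to a global Riemannian de~Rham decomposition, exhibiting the symmetric leaf $L(q)$ as a de~Rham factor, contradicting the hypothesis. Transitivity then gives $\dim G' = \dim H' + \dim M$ with $H' := H\cap G'$.

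For~(4), the isotropy representation of $H'$ on $T_qM$ preserves the splitting $\mathfrak s_q\oplus(\mathfrak s_q)^\perp$. Commutation with the transitive $G^{\mathfrak s}$-action on $L(q)$ constrains the $H'$-action on $\mathfrak s_q$ to the centralizer of the leaf transvection algebra, a contribution that must be carefully balanced against the leaf dimension $i_{\mathfrak s}(M)$, while the effective action on the $k$-dimensional space $(\mathfrak s_q)^\perp$ is faithful and orthogonal and thus embeds into $\mathfrak{so}(k)$. A careful dimension count over these two pieces yields $\dim G' \le k(k+1)/2 = \dim\mathfrak{so}(k+1)$. In the equality case, the isotropy must contain $SO(k)$ acting standardly on the transverse, and the classification of transitive effective compact group actions on $S^k$ identifies the universal cover of $G'$ as $Spin(k+1)$, proving~(5); a finer count within this structure yields~(6) for $k\ge 3$. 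The main obstacle I anticipate is the transitivity step~(2): promoting an infinitesimal commutation to a global Riemannian splitting requires a careful holonomy or slice argument, and a secondary subtlety appears in~(4), where a positive-dimensional leaf forces a delicate balancing between the leaf and transverse contributions to the isotropy.
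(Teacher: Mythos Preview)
This theorem is not proved in the paper at all: it is quoted in the preliminaries section as a known result from \cite{berndt-olmos-reggiani-2017}, with no proof given. There is therefore no ``paper's own proof'' to compare your proposal against; the paper uses the theorem as input, not as something it establishes.

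On the merits of your sketch itself: the setup for (1)--(3) via the $Q$-orthogonal complement of the ideal $\mathfrak g^{\mathfrak s}$ is standard and sound. The substantive content is the dimension bound (4), and there your argument has a genuine gap. Writing $\dim G' = \dim H' + \dim M = \dim H' + k + i_{\mathfrak s}(M)$, a faithful embedding of $H'$ into $SO(k)$ via the transverse isotropy representation would at best give $\dim H' \le \binom{k}{2}$, hence $\dim G' \le \binom{k+1}{2} + i_{\mathfrak s}(M)$, which overshoots the claimed bound by the leaf dimension. Your phrase ``carefully balanced against the leaf dimension $i_{\mathfrak s}(M)$'' correctly identifies where the difficulty lies but does not supply a mechanism to absorb that term. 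The argument in the cited source is more structural: it exploits the decomposition $\bar{\mathfrak g}^q = \hat{\mathfrak g} \oplus \bar{\mathfrak g}'^q$ of the transvection algebra of the leaf (the paragraph following the theorem statement in this paper recalls exactly this) to control how much of $\mathfrak g'$ is already spent on the leaf of symmetry. Without that ingredient, the count does not close.
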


In particular, the item \ref{item:1} of the above theorem gives us a
bound on the dimension of $M$ in terms of its co-index of
symmetry. Finally, recall that the Lie algebra $\bar{\mathfrak g}^q$
of $\bar G^q$ (which is isomorphic to $\mathfrak g^q$) can be
decomposed as a sum of ideals
\begin{equation*}
  \bar{\mathfrak g}^q = \hat{\mathfrak g} \oplus \bar{\mathfrak g}'^q,
\end{equation*}
where $\bar{\mathfrak g}'^q$ is the restriction of $\mathfrak g' \cap
\mathfrak g^q$ to $L(q)$ and $\hat{\mathfrak g}$ is the restriction of
$\mathfrak g^{\mathfrak s} \cap \mathfrak g^q$. (Recall that
$\mathfrak g^{\mathfrak s} \cap \mathfrak g^q$ could contain an ideal
which acts trivially on $L(q)$.)

In the case of co-index $4$, Theorem \ref{sec:preliminaries} says that
the underling manifold, up to a cover, is one of the following:
\begin{multicols}{2}
  \begin{itemize}
  \item $SO(5)/SO(2)$
  \item $SO(5)/SO(3)$
  \item $SO(5)/(SO(2) \times SO(2))$
  \item $SO(5)/(SO(3) \times SO(2))$
  \item $SO(5)/SO(4)$
  \item $SO(4)$
  \item $SO(3) \times SO(3) \times SO(3)$
  \end{itemize}
\end{multicols}

The rest of the article is devoted to decide which ones of the above
manifolds does actually admit an invariant metric with co-index of
symmetry $4$. 

\section{Inadmissible manifolds}
\label{sec:inadm-manif-2}

\begin{theorem}
  \label{sec:inadm-manif-1}
  There is not any $SO(5)$-invariant metric on $M = SO(5)/SO(2)$ with
  co-index of symmetry equal to $4$.
\end{theorem}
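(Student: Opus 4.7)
My plan is to proceed by contradiction. Suppose that $M = SO(5)/SO(2)$ admits such an $SO(5)$-invariant metric and set $q = eH$. Then $\dim L(q) = 9 - 4 = 5$, and since the evaluation $\mathfrak{p}^q \hookrightarrow T_q M$ is injective (a Killing field with $X_q = 0$ and $(\nabla X)_q = 0$ vanishes identically), together with $\mathfrak{k}^q \subseteq \mathfrak{h} = \so(2)$, one has $\dim \mathfrak{p}^q = 5$ and $\dim \mathfrak{g}^q \leq 6$.

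The first key step is to analyze the isotropy representation. As an $SO(2)$-module, $\mathfrak{m} = \so(5)/\so(2)$ decomposes as $3V_0 \oplus 3V_1$ (with $V_0$ the trivial and $V_1$ the standard $2$-dimensional $SO(2)$-representation), since $SO(2)$ acts trivially on the $\so(3)$ block and as the standard representation on each column of the off-diagonal $\mathbb{R}^2 \otimes \mathbb{R}^3$ part. Hence the $5$-dimensional $SO(2)$-invariant subspace $\mathfrak{s}_q$ must be of the form $kV_0 \oplus lV_1$ with $k + 2l = 5$ and $k \leq 3$, leaving $(k, l) \in \{(3, 1), (1, 2)\}$. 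The case $(1, 2)$ would require $L(q)$ to split off a $1$-dimensional flat factor and a $4$-dimensional irreducible non-flat factor with $1$-dimensional isotropy, contradicting the fact that every $4$-dimensional irreducible Riemannian symmetric space has isotropy of dimension $\geq 4$. Therefore $(k, l) = (3, 1)$, which (together with $\dim \bar{\mathfrak{k}}^q \leq 1$) forces $L(q)$ to be locally isometric to $T^3 \times S^2$, and hence $\bar{\mathfrak{g}}^q \cong \mathbb{R}^3 \oplus \so(3)$.

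I would then invoke the decomposition $\bar{\mathfrak{g}}^q = \hat{\mathfrak{g}} \oplus \bar{\mathfrak{g}}'^q$ with $\mathfrak{g}' = \so(5)$. On the $\bar{\mathfrak{g}}'^q$ side, $\mathfrak{a} := \mathfrak{g}' \cap \mathfrak{g}^q$ is a reductive subalgebra of $\so(5)$ of dimension $\leq 6$, and a direct check (using that $\so(5)$ has rank $2$) shows that the only reductive subalgebras of $\so(5)$ containing $\so(3)$ of dimension at most $6$ are $\so(3)$, $\so(3) \oplus \so(2)$, and $\so(4) = \so(3) \oplus \so(3)$. On the $\hat{\mathfrak{g}}$ side, $\mathfrak{g}^{\mathfrak{s}}$ centralizes $\so(5)$ and therefore embeds as a subalgebra of the Lie algebra of $N_{SO(5)}(SO(2))/SO(2) \cong SO(3)$; in particular, the abelian quotients of $\mathfrak{g}^{\mathfrak{s}}$ have dimension at most $1$.

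The final step is to distribute the two ideal summands $\mathbb{R}^3$ and $\so(3)$ of $\bar{\mathfrak{g}}^q$ between $\hat{\mathfrak{g}}$ and $\bar{\mathfrak{g}}'^q$, and to reach a contradiction in each case. If $\so(3) \subseteq \bar{\mathfrak{g}}'^q$, then the complementary $\hat{\mathfrak{g}}$ is an abelian ideal obtained as a quotient of a subalgebra of $\so(3)$ and so has dimension at most $1$; then $\bar{\mathfrak{g}}'^q$ has the form $\mathbb{R}^k \oplus \so(3)$ with $k \in \{2, 3\}$, and enumerating the three candidate $\mathfrak{a}$ shows that no choice of ineffective kernel of dimension $\leq 1$ produces such a quotient. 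If instead $\so(3) \subseteq \hat{\mathfrak{g}}$, then $\mathfrak{g}^{\mathfrak{s}} \cong \so(3)$ restricts isomorphically to the $\so(3)$ summand, forcing $\bar{\mathfrak{g}}'^q = \mathbb{R}^3$; but then $\mathfrak{a}$ would be an abelian subalgebra of $\so(5)$ of dimension $\geq 3$, impossible since maximal abelian subalgebras of $\so(5)$ are $2$-dimensional. The main technical obstacle lies in the first sub-case, where the ineffective kernel in the reduction $\mathfrak{a} \to \bar{\mathfrak{g}}'^q$ must be tracked carefully for each candidate $\mathfrak{a}$, particularly when $\mathfrak{a} = \so(3) \oplus \so(2)$.
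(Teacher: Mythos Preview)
Your approach differs from the paper's: rather than listing all seven $5$-dimensional compact symmetric spaces and eliminating each, you exploit the constraint $\mathfrak{k}^q \subseteq \mathfrak{h} = \so(2)$ (so the holonomy of $L(q)$ has dimension at most $1$) together with the isotropy decomposition to pin $L(q)$ down directly, and then analyse the ideal decomposition $\bar{\mathfrak g}^q = \hat{\mathfrak g} \oplus \bar{\mathfrak g}'^q$. The observation $\dim\bar{\mathfrak k}^q\le 1$ is a genuine shortcut---it kills $S^5$, $S^4\times T^1$, $S^3\times S^2$, $S^3\times T^2$ and $S^2\times S^2\times T^1$ in one stroke, whereas the paper treats the first three individually. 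For the surviving cases $S^2\times T^3$ and $T^5$ the paper instead uses a rank argument ($G'=SO(5)$ must contain a subgroup transitive on $L(q)$, impossible when $\rank L(q)\ge 3>\rank SO(5)$), which is considerably quicker than your subalgebra enumeration.

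There are, however, two real gaps. First, the isotropy decomposition $\mathfrak m \cong 3V_0 \oplus 3V_1$ and the identification $N_{SO(5)}(SO(2))/SO(2) \simeq SO(3)$ hold only for the \emph{standard} block embedding $SO(2)\hookrightarrow SO(5)$; the theorem (see the remark immediately following its proof) is meant to cover every embedding, and for instance $t\mapsto\mathrm{diag}(R(t),R(2t),1)$ gives $\mathfrak m \cong V_0\oplus 2V_1\oplus V_2\oplus V_3$ and a $1$-dimensional normalizer quotient, so your first key step simply does not apply. Second, even for the standard embedding your case split is incomplete: from $(k,l)=(3,1)$ you conclude $L(q)\simeq T^3\times S^2$, but this tacitly assumes $\bar{\mathfrak k}^q=\so(2)$; the possibility $\bar{\mathfrak k}^q=0$, which yields $L(q)\simeq T^5$ and $\bar{\mathfrak g}^q=\mathbb R^5$, is never addressed. (It can be dispatched by your own method: $\bar{\mathfrak g}'^q$ would then be abelian of dimension $\ge 4$, forcing $\dim Z(\mathfrak a)\ge 4$ inside $\so(5)$, which is impossible.) A smaller point: in the $(1,2)$ case you assert the $4$-dimensional complement is irreducible, but you must also exclude $S^2\times S^2$, which fails because its holonomy is $\so(2)\oplus\so(2)$ rather than $\so(2)$.
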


\begin{proof}
  Since $\dim M = 9$ and $ci_{\mathfrak s}(M) = 4$, the leaf of
  symmetry $L(q)$ is a symmetric space of dimension $5$. Since we are
  working locally, we can assume that $L(q)$ is product of a simply
  connected symmetric space of the compact type and a (possibly
  trivial) torus. So, the different possibilities for $L(q)$ are $S^5$,
  $S^4 \times T^1$, $S^3 \times S^2$, $S^3 \times T^2$, $S^2 \times
  S^2 \times T^1$, $S^2 \times T^3$ and $T^5$. 

  Let us first look at the case $L(q) = S^5$. Here $\so(6) =
  \bar{\mathfrak g}^q = \hat{\mathfrak g} \oplus \bar{\mathfrak g}'^q$
  is a simple Lie algebra, and hence one of these two ideals must be
  trivial. Since $\so(6)$ is the full isometry Lie  algebra of $L(q)$,
  and from Theorem \ref{sec:preliminaries}, $G'$ has isotropy group of
  positive dimension, we conclude that $\hat{\mathfrak g} = 0$. This
  implies that $\so(6) = \bar{\mathfrak g}^q$, which acts effectively
  on $L(q)$, must be contained in $\mathfrak g' = \so(5)$. A
  contradiction. For the case $L(q) = S^4 \times T^1$ we argue
  similarly. Here $\bar{\mathfrak g}^q = \so(5) \oplus \mathbb R$, so
  in the decomposition $\bar{\mathfrak g}^q = \hat{\mathfrak g} \oplus
  \bar{\mathfrak g}'^q$ we must have $\hat{\mathfrak g} = \mathbb R$
  and $\bar{\mathfrak g}'^q = \mathfrak g' = \so(5)$, and hence
  $\mathfrak g'$ could not be transitive on $M$, which is absurd.
  
  Assume now that $L(q) = S^3 \times S^2$, and hence $\bar{\mathfrak
    g}^q = \so(4) \oplus \so(3)$ as a direct sum of ideals, where the
  first summand corresponds to the full isometry Lie algebra of $S^3$
  and the second one is the isometry algebra of $S^2$. Since $G'$ is
  transitive on $M$, $\bar{\mathfrak g}'^q$ splits as the direct sum
  of two ideals $\mathfrak g^3 \oplus \so(3)$, where $\mathfrak g^3$
  is the Lie algebra of a transitive isometry subgroup of $S^3$ and
  the second summand is the Lie algebra of the full isometry group of
  $S^2$. We claim that $\hat{\mathfrak g} = 0$ in the decomposition
  $\bar{\mathfrak g}^q = \hat{\mathfrak g} \oplus \bar{\mathfrak
    g}'^q$. Otherwise, we must have that $\hat{\mathfrak g} \simeq
  \so(3)$ and, up to an isometry of $M$, $\bar{\mathfrak g}^q$
  decomposes in the following manner. If we identify the sphere $S^3$
  with the unit quaternions, then we can present $\bar{\mathfrak g}^q
  = \so(3)^\ell \oplus \so(3)^r$ as a direct sum of ideals isomorphic
  to $\so(3)$, where $\so(3)^\ell$ and $\so(3)^r$ are the Lie algebras
  of the left and right multiplications respectively on $S^3$. Without
  lose of generality we can assume that $\hat{\mathfrak g} =
  \so(3)^\ell$ and $\bar{\mathfrak g}'^q = \so(3)^r \oplus
  \so(3)$. Let us denote by $SO(4) = SO(3)^\ell \times SO(3)^r$
  (almost direct product) the isometry group of the factor $S^3$ of
  $L(q)$. Since $\hat{\mathfrak g} \subset {\mathfrak g}^{\mathfrak
    s}$, we have that $SO(3)^\ell$ leaves invariant the factor $S^3$
  of any other leaf of symmetry. This implies that $SO(3)^r$ does so,
  and hence $\so(3)^r$ must be contained in $\hat{\mathfrak g}$, a
  contradiction from assuming $\hat{\mathfrak g} \neq \{0\}$. So,
  $\bar{\mathfrak g}^q = {\mathfrak g}'^q = \so(4) \oplus \so(3)$ is
  the direct sum of the Lie algebras of transvections of $S^3$ and
  $S^2$. This says that de dimension of the isotropy group of $G'$ is
  greater or equal than $2$, which is a contradiction. This excludes
  the case $L(q) = S^3 \times S^2$.

  The cases $S^3 \times T^2$, $S^2 \times S^2 \times T^1$, $S^2 \times
  T^3$ and $T^5$ can be disregarded all at once with the following
  argument. In such cases the leaf of symmetry is a symmetric space
  of rank at least $3$, and $G' = SO(5)$ must contain a subgroup which
  is transitive on $L(q)$, but this is impossible since $SO(5)$ has
  rank $2$.
\end{proof}

\begin{remark}
  Recall that the proof of Theorem \ref{sec:inadm-manif-1} is
  independent of the choice of the inclusion $SO(2) \hookrightarrow
  SO(5)$, for which there are infinitely many geometric
  possibilities. 
\end{remark}

\begin{proposition}
  \label{sec:inadm-manif}
There is not any $SO(5)$-invariant metric on $M = SO(5)/(SO(3) \times
SO(2))$ with co-index of symmetry equal $4$.  
\end{proposition}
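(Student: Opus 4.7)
The plan is to derive the nonexistence directly from the irreducibility of the isotropy representation of $M$, bypassing the leaf-of-symmetry case analysis that was used in Theorem~\ref{sec:inadm-manif-1}.

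First I would record the dimension count: $\dim M = 10 - 4 = 6$, so $ci_{\mathfrak s}(M) = 4$ forces $i_{\mathfrak s}(M) = 2$. Any candidate metric would therefore carry a $G$-invariant distribution of symmetry $\mathfrak s$ of rank $2$, and since $SO(5) \subseteq G = I(M)$, the fibre $\mathfrak s_{eH}$ would be a $2$-dimensional $SO(3) \times SO(2)$-invariant subspace of $T_{eH} M$.

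Next I would identify $T_{eH} M$ with the off-diagonal complement $\mathfrak m \subset \so(5)$ to $\so(3) \oplus \so(2)$ arising from the block decomposition. As an $SO(3) \times SO(2)$-module this is the tensor product $\mathbb R^3 \otimes \mathbb R^2$ of the two standard representations. The one technical step is to verify that this is irreducible as a real representation. Complexifying yields $V_1 \otimes \chi_1 \oplus V_1 \otimes \chi_{-1}$, a sum of two non-isomorphic conjugate complex irreducibles of $\so(3) \oplus \so(2)$, which is the standard pattern of a real irreducible representation of complex type; equivalently, one can check directly that the linear span of any $SO(3) \times SO(2)$-orbit of a nonzero vector in $\mathfrak m$ is all of $\mathfrak m$. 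Hence $\mathfrak m$ admits no proper nonzero invariant subspace.

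This forces $\mathfrak s_{eH} \in \{0, \mathfrak m\}$, so $i_{\mathfrak s}(M) \in \{0, 6\}$, contradicting the required value $2$. The only point that genuinely needs attention is the real-irreducibility check; once that is in hand, the conclusion is immediate and no finer information about $\bar{\mathfrak g}^q$ is required.
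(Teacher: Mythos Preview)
Your argument is correct and genuinely different from the paper's. The paper proceeds through the structure theory of the leaf of symmetry: it observes that a $2$-dimensional leaf forces $\dim\bar{\mathfrak g}^q\le 3$, while the isotropy algebra $\so(3)\oplus\so(2)$ must inject into $\bar{\mathfrak g}'^q$, a contradiction. You instead exploit that $SO(5)/(SO(3)\times SO(2))$ is isotropy irreducible, so any $SO(5)$-invariant distribution---in particular $\mathfrak s$---has rank $0$ or $6$. Your route is more elementary and in fact yields more: since the isotropy representation is irreducible, the $SO(5)$-invariant metric is unique up to scaling and is the symmetric one on the real Grassmannian $G_2(\mathbb R^5)$, so $i_{\mathfrak s}(M)=6$ always. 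Amusingly, the paper uses exactly this isotropy-irreducibility observation in the remark following the proposition to dismiss $SO(5)/SO(4)$, but does not apply it here. The paper's approach, on the other hand, illustrates the $\bar{\mathfrak g}^q=\hat{\mathfrak g}\oplus\bar{\mathfrak g}'^q$ machinery in a simple setting and does not need the representation-theoretic check. One small point you leave implicit: your identification of $\mathfrak m$ with $\mathbb R^3\otimes\mathbb R^2$ presumes the standard block embedding, but this is harmless since (by a centralizer argument) that is the unique embedding of $SO(3)\times SO(2)$ in $SO(5)$ up to conjugacy.
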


\begin{proof}
  Since $\dim M = 6$, if the metric has co-index $4$, then the leaf of
  symmetry $L(q)$ must be locally isometric to the sphere $S^2$ or the
  torus $T^2$. This implies that $\dim \bar{\mathfrak g}^q \le 3$ and
  $\hat{\mathfrak g} = \{0\}$ in the decomposition $\bar{\mathfrak
    g}^q = \hat{\mathfrak g} \oplus \bar{\mathfrak g}'^q$. On the
  other hand, we have that the isotropy group $SO(3) \times SO(2)$ of
  $G' = SO(5)$ leaves invariant $L(q)$ and hence, $\so(3) \oplus
  \so(2) \subset \bar{\mathfrak g}'^q$. This is impossible, since the
  action of $\bar G^q$ on $L(q)$ is almost effective.
\end{proof}

\begin{remark}
  As a matter of fact, the case of $M = SO(5)/SO(4)$, which is
  diffeomorphic to the sphere $S^4$, is not even under consideration
  because co-index $4$ means that $i_{\mathfrak s}(M) = 0$, and we are
  only interested in the cases where the distribution of symmetry is
  non-trivial. Nevertheless, this situation is also impossible, since
  is a well-known fact that the only $SO(5)$-invariant metric on $S^5$
  is the round one (up to scaling). This follows, for instance, from
  the fact that $SO(5)/SO(4)$ is an isotropy irreducible space (see
  \cite{wolf-1968}). 
\end{remark}

\section{Examples of spaces with co-index of symmetry $4$}

In this section we present an example of a metric with co-index of
symmetry $4$ for each of the manifolds which were not excluded in
Section \ref{sec:inadm-manif-2}.

\subsection{Double symmetric pairs}
\label{sec:double-symm-pairs}

For the first two examples we use a construction given in
\cite{olmos-reggiani-tamaru-2014} using double symmetric pairs. Let us
review briefly this argument. Let us as consider a triple
$G \supset G' \supset K'$ where $G$ is a compact Lie group and $G'$,
$K'$ are compact subgroups of $G$. Assume that $G'$ is simple and
$(G', K')$ is a symmetric pair (which cannot be of the group
type). Let $(-,-)$ be an $\Ad(G)$-invariant inner product on the Lie
algebra $\mathfrak g'$ of $G'$. Denote by
$\mathfrak g' = \mathfrak k' \oplus \mathfrak p'$ the Cartan
decomposition of $(G', K')$, where $\mathfrak k'$ is the Lie algebra
of $K'$. Recall that, since $G'$ is simple, the restriction of
$(-, -)$ to $\mathfrak g'$ is a multiple of the Killing form of
$\mathfrak g'$, and so $\mathfrak k'$ is orthogonal to $\mathfrak p'$
with respect to $(-, -)$. Let $\mathfrak m$ be orthogonal complement
of $\mathfrak k'$ with respect to $(-, -)$. Since
$\mathfrak p' \subset \mathfrak m$, we have an orthogonal
decomposition $\mathfrak m = \mathfrak m' \oplus \mathfrak p'$, where
$\mathfrak m' = (\mathfrak p')^\bot \cap \mathfrak m$. Now, we define
an inner product $\langle-, -\rangle$ on $\mathfrak m$ by asking:
\begin{align*}
  \langle\mathfrak m', \mathfrak p'\rangle = 0,
  &&
     \langle-, -\rangle|_{\mathfrak m' \times \mathfrak m'} = (-,
     -)|_{\mathfrak m' \times \mathfrak m'},  
  &&
     \langle-, -\rangle|_{\mathfrak p' \times \mathfrak p'} =
     2(-,-)|_{\mathfrak p' \times \mathfrak p'}. 
\end{align*}
Endow $M = G/K'$ with the $G$-invariant Riemannian metric induced by
the inner product $\langle-, -\rangle$ on $\mathfrak m = T_{eH}M$. We
denote such metric with same symbol $\langle-, -\rangle$. It follows
from the results in \cite{olmos-reggiani-tamaru-2014} that the
$G$-invariant distribution induced by $\mathfrak p'$ is contained in
the distribution of symmetry of $M$. Moreover, if $G/G'$ is an
irreducible symmetric space (with the normal homogeneous metric) and
$G/K'$ is not a locally symmetric space, then the distribution of
symmetry of $M$ is exactly the distribution induced by $\mathfrak p'$
and the leaf of symmetry is isometric to $G'/K'$.

\subsection{The case of $SO(5)/SO(3)$}
\label{sec:case-so5so3}

This case case was already treated in
\cite{olmos-reggiani-tamaru-2014}. Consider the standard inclusions
$SO(5) \supset SO(4) \supset SO(3)$. The construction given in
Subsection \ref{sec:double-symm-pairs} does not apply directly, since
$SO(4)$ is not simple, but this difficulty can be avoided by noticing
that the restriction of the Killing form of $\so(5)$ to $\so(4)$ is a
multiple of the Killing form of $\so(4)$. The above construction gives
an $SO(5)$-invariant metric on $SO(5)/SO(3)$ with leaf of symmetry
isometric to the sphere $S^3 = SO(4)/SO(3)$. (One should check that
$SO(5)/SO(3)$ is not a symmetric space.) Recall that, after a
rescaling of the metric, $SO(5)/SO(3)$ is isometric to the unit
tangent bundle of the $4$-sphere of curvature $2$ (with the Sasaki
metric).

Recall that in principle there are infinitely many possible
presentations of $SO(5)/SO(3)$ as a homogeneous manifolds. However, we
shall prove here that if a such a manifold admits a $SO(5)$-invariant 
Riemannian metric with co-index of symmetry $4$, the presentation is
essentially the one given by the identification
\begin{equation}
  \label{eq:4}
  SO(3) \simeq
  \begin{pmatrix}
    SO(3) & & \\
    & 1 & 0 \\
    & 0 & 1
  \end{pmatrix}
  \subset SO(5).
\end{equation}

\begin{theorem}
  \label{sec:case-so5so3-1}
  Let $M = SO(5)/H$ be a Riemannian homogeneous manifold where $H$ is
  a closed subgroup of $SO(5)$ isomorphic to $SO(3)$. Assume that $M$
  has co-index of symmetry $4$ and that the universal cover of $M$ does
  not have a symmetric de Rham factor. Then $H$ is conjugated to the
  subgroup given in (\ref{eq:4}).
\end{theorem}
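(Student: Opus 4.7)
The plan is to classify the $SO(5)$-conjugacy classes of subgroups $H \subset SO(5)$ isomorphic to $SO(3)$ and then eliminate all but the standard one using the co-index hypothesis. Any such embedding is determined, up to $SO(5)$-conjugacy, by a faithful $5$-dimensional real orthogonal representation of $SO(3)$. Since the real irreducible representations $V_k$ of $SO(3)$ have dimensions $2k+1$, the faithful $5$-dimensional decompositions are exactly $V_0 \oplus V_0 \oplus V_1$, producing the standard inclusion (\ref{eq:4}), and the irreducible $V_2$. So the whole problem reduces to excluding the irreducible embedding $H = SO(3)_{\mathrm{irr}}$.

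Suppose $H$ is irreducible. Identifying $\so(5) \cong \Lambda^2 V_2$ as an $H$-module and using the Clebsch--Gordan decomposition for $SO(3)$ to write $\Lambda^2 V_2 \cong V_1 \oplus V_3$, we recognize the $V_1$-summand as $\so(3) = \mathfrak{h}$, so the isotropy representation on $\mathfrak{m} \cong \so(5)/\so(3)$ is the irreducible $V_3$ of dimension $7$. Thus $M = SO(5)/H$ is isotropy irreducible, and in particular the distribution of symmetry $\mathfrak{s}_{eH}$, being an $H$-invariant subspace of $\mathfrak{m}$, is either $\{0\}$ or all of $\mathfrak{m}$.

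The first alternative gives $ci_{\mathfrak{s}}(M) = 7$, while in the second every element of $\mathfrak{m}$ would be a transvection at $eH$, making $M$ locally (and hence globally) symmetric. This contradicts the hypothesis that the universal cover has no symmetric de Rham factor, and one may also verify directly that $(SO(5), SO(3)_{\mathrm{irr}})$ is not a symmetric pair: the inner involutions of $SO(5)$ have fixed subgroups $SO(5)$, $SO(4)$ or $SO(2) \times SO(3)$, and $SO(5)$ admits no outer automorphisms. In either case $ci_{\mathfrak{s}}(M) \neq 4$, so $H$ must be conjugate to the standard embedding in (\ref{eq:4}).

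The key input, and really the only substantive step, is the Clebsch--Gordan decomposition $\Lambda^2 V_2 \cong V_1 \oplus V_3$ witnessing isotropy irreducibility; once this is in hand the argument is immediate. The appeal to strongly symmetric autoparallel distributions mentioned in the introduction presumably provides a more uniform framework that also handles situations where the conjugacy classes of embeddings form a continuous family (as for $SO(4)/SO(2)$ in the co-index $3$ analysis), but in the present case the representation-theoretic bookkeeping settles the question directly.
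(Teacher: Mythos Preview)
Your proof is correct and takes a genuinely different route from the paper's. You classify the embeddings $SO(3)\hookrightarrow SO(5)$ up front via representation theory---there are only the standard $2V_0\oplus V_1$ and the irreducible $V_2$---and then dispose of the irreducible one by observing that $\Lambda^2 V_2 \cong V_1 \oplus V_3$ makes $SO(5)/SO(3)_{\mathrm{irr}}$ isotropy irreducible, forcing $i_{\mathfrak s}(M)\in\{0,7\}$. This is clean and entirely elementary; in particular it shows that the paper's remark that ``in principle there are infinitely many possible presentations of $SO(5)/SO(3)$'' is overly cautious.

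The paper instead never enumerates embeddings. It starts from the co-index~$4$ hypothesis, so $\mathfrak s_p$ is an $H$-invariant $3$-plane (which already implicitly excludes the irreducible embedding), writes $T_pM=\mathfrak s_p\oplus\mathbb W\oplus\mathbb L$, and then shows that the two restrictions $\rho_1\colon H\to SO(\mathfrak s_p)$ and $\rho_2\colon H\to SO(\mathbb W)$ are both faithful by invoking the machinery of strongly symmetric autoparallel distributions (Appendix~\ref{sec:strongly-symm-autop}): a nontrivial $\ker\rho_1$ would produce a strongly symmetric distribution of co-rank~$3$ and hence a transitive semisimple subgroup of $SO(5)$ of dimension~$\le 6$, while a nontrivial $\ker\rho_2$ would make $\mathfrak s$ parallel. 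Your approach is shorter here because the embedding problem happens to be finite; the paper's argument, as you correctly anticipate, is designed to transfer to situations (like $SO(4)/SO(2)$ in the co-index~$3$ case) where a direct enumeration of conjugacy classes is unavailable.
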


\begin{proof}
  Recall that for the standard homogeneous presentation $SO(5)/SO(3)$,
  the isotropy representation is given by the $\ad$-representation,
  restricted to the reductive complement associated with the normal
  homogeneous decomposition $\so(5) = \so(3) \oplus \mathfrak m$,
  where  
  \begin{equation*}
    \mathfrak m = \left\{\left(
    \begin{smallmatrix}
      0 & 0 & 0 & -a & -b \\
      0 & 0 & 0 & -c & -d \\
      0 & 0 & 0 & -e & -f \\
      a & c & e & 0 & -g \\
      b & d & f & g & 0
    \end{smallmatrix} 
    \right): a, b, c, d, e, f, g \in \mathbb R\right\}
  \end{equation*}
  We decompose $\mathfrak m = \mathfrak m_+ \oplus \mathfrak m_{-}
  \oplus \mathfrak m_0$ into irreducible subrepresentations where
  \begin{align*}
    \mathfrak m_+ = \left\{\left(
    \begin{smallmatrix}
      0 & 0 & 0 & -a & 0 \\
      0 & 0 & 0 & -c & 0 \\
      0 & 0 & 0 & -e & 0 \\
      a & c & e & 0 & 0 \\
      0 & 0 & 0 & 0 & 0
    \end{smallmatrix}
                      \right): a, b, c \in \mathbb R\right\}
        &&
           \mathfrak m_{-} = \left\{\left(
           \begin{smallmatrix}
             0 & 0 & 0 & 0 & -b \\
             0 & 0 & 0 & 0 & -d \\
             0 & 0 & 0 & 0 & -f \\
             0 & 0 & 0 & 0 & 0 \\
             b & d & f & 0 & 0
           \end{smallmatrix}
                             \right): b, d, f \in \mathbb R\right\}
  \end{align*}
  and $\mathfrak m_0$ corresponds to the fixed points of $SO(3)$. Note
  also that $H$ acts on $\mathfrak m_{\pm}$ as the adjoint
  representation of $SO(3)$. So, in order to prove the theorem, it is
  enough to show that the isotropy representation of $M = SO(5)/H$ at
  $p \in M$ is equivalent to the representation given above. Decompose 
  \begin{equation*}
    T_pM = \mathfrak s_p \oplus \mathbb W \oplus \mathbb L
  \end{equation*}
  as an orthogonal sum of invariant subspaces, where $\dim\mathbb W =
  \dim \mathfrak s_p = 3$ and $\mathbb L$ is a line of fixed vectors
  of $H$ in $T_pM$. Note that it is enough to show that the Lie group
  morphisms $\rho_1: H \to SO(\mathfrak s_p)$ and $\rho_2: H \to
  SO(\mathbb W)$, given by the restriction of the isotropy
  representation to $\mathfrak s_p$ and $\mathbb W$, are both
  isomorphisms. Let us denote $\Phi_i = \ker\rho_i$ for $i = 1,
  2$. Notice that $\Phi_1 \cap \Phi_2$ is trivial, since we are
  assuming that the action of $SO(5)$ is effective.
  
  Suppose that $\Phi_1$ is not trivial, and let $\mathcal D^{\Phi_1}$
  be the $SO(5)$-invariant distribution on $M$ given by the fixed
  points of $\Phi_ 1$. That is, $\mathcal D^{\Phi_1}_p = \mathfrak s_p
  \oplus \mathbb L$. It is known that $\mathcal D^{\Phi_1}$ is an
  autoparallel distribution (see for instance \cite[Lemma
  5.1]{olmos-reggiani-2012}). Moreover, since $\mathfrak s$ is the
  distribution of symmetry of $M$, it follows that $\mathcal
  D^{\Phi_1}$ is strongly symmetric with respect to $SO(5)$ (see
  Appendix \ref{sec:strongly-symm-autop}). Notice that the co-rank of 
  $\mathcal D^{\Phi_1}$ is $3$. Since $M$ does not have a symmetric de
  Rham factor, we conclude that there exists a transitive semisimple
  subgroup $G' \subset SO(5)$ with $\dim G' \le 6$, which is
  absurd. So $\Phi_1$ is trivial. 

  Now assume that $\Phi_2$ is not trivial and consider the
  $SO(5)$-invariant autoparallel distribution $\mathcal D^{\Phi_2}$
  induced by the fixed vectors of $\Phi_2$. Since $\mathcal
  D^{\Phi_2}_p = \mathbb W \oplus \mathbb L$, and $\mathfrak s =
  (\mathcal D^{\Phi_2})^\bot$ is also autoparallel, it follows that
  the distribution of symmetry is parallel on $M$, and hence $M$ has
  locally a symmetric de Rham factor. This is a contradiction,
  therefore $\Phi_2$ must be trivial. 
\end{proof}

\subsection{The case $SO(5)/(SO(2) \times SO(2))$}

Now consider the standard inclusions $SO(5) \supset SO(4) \supset
SO(2) \times SO(2)$. We have here the same situation as in the above
case where the Killing form of $\so(4)$ is a scalar multiple of the
restriction of the Killing form of $\so(5)$, so the construction from
double symmetric pairs applies. Recall that $SO(4)/(SO(2) \times
SO(2))$ is the Grassmannian $G_2^+(\mathbb R^4)$ of oriented
$2$-planes in $\mathbb R^4$, which is isometric to the product of
round spheres $S^2 \times S^2$. So, the metric of Subsection
\ref{sec:double-symm-pairs} gives us a $SO(5)$-invariant metric on
$SO(5)/(SO(2) \times SO(2))$, with leaf of symmetry $G_2^+(\mathbb
R^4)$, provided it is not symmetric.

\begin{lemma}
  With the $SO(5)$-invariant metric defined in the above paragraph,
  the space $M = SO(5)/(SO(2) \times SO(2))$ is not a locally
  symmetric space.  
\end{lemma}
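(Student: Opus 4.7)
The plan is to argue by contradiction: suppose $M$ with the given metric is locally symmetric. From the homotopy exact sequence of $SO(2)\times SO(2)\hookrightarrow SO(5)\to M$, together with the vanishing of $\pi_2$ of a Lie group, one sees that $\pi_1(M)$ is finite, so the universal cover $\tilde M$ is compact, simply connected of dimension $8$, and, by local symmetry, is a Riemannian symmetric space. The lift of $SO(5)$ to $Spin(5)$ acts transitively on $\tilde M$ by isometries with isotropy of dimension $2$.

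Next, apply the de Rham decomposition to write $\tilde M = M_1\times\cdots\times M_k$ as a product of irreducible compact simply connected symmetric factors (no Euclidean factor, by compactness). Since $\so(5)$ is simple, each induced map $\so(5)\to\mathfrak{i}(M_j)$ is either zero or injective. If at least two of these projections are injective, then each corresponding factor has isometry algebra of dimension $\ge 10$; combined with $\dim\tilde M=8$ and the fact that every irreducible factor has dimension $\ge 2$, the only possibility is $\tilde M=S^4\times S^4$ with $Spin(5)$ acting diagonally. But the common $SO(5)$-stabilizer of two generic unit vectors in $\mathbb{R}^5$ is $SO(3)$, so the diagonal orbit has dimension $10-3=7$, ruling out transitivity. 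Therefore $\tilde M$ must be irreducible.

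The remaining task is to rule out a transitive $Spin(5)$-action on each irreducible compact simply connected symmetric space of dimension $8$, namely on $S^8$, $\mathbb{C}P^4$, $SU(4)/S(U(2)\times U(2))$, $\mathbb{H}P^2$, $G_2^+(\mathbb{R}^6)$, $G_2/SO(4)$, and the group manifold $SU(3)$. The cases $S^8$, $\mathbb{C}P^4$, and $G_2^+(\mathbb{R}^6)$ are excluded because every embedding of $Spin(5)$ into the corresponding full isometry group preserves a proper subspace of the defining representation, hence has a closed orbit of strictly smaller dimension; in $\mathbb{H}P^2$ the unique embedding $Sp(2)\hookrightarrow Sp(3)$ fixes a point; in $SU(4)/S(U(2)\times U(2))$ the embedding $Sp(2)\hookrightarrow SU(4)$ preserves a symplectic form and thereby splits the complex $2$-planes into Lagrangian and non-Lagrangian orbits; for $SU(3)$ the isometry algebra is $\su(3)\oplus\su(3)$ and, since $\dim\so(5)>\dim\su(3)$, simplicity forces both projections of a hypothetical embedding to vanish; and $Spin(5)$ does not embed at all in $G_2$, whose maximal connected subgroups $SU(3)$ and $SO(4)$ both have dimension less than $10$. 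The main obstacle is executing this seven-fold case analysis rigorously, ensuring that no exotic isometric embedding of $Spin(5)$ has been overlooked in any of the candidate target spaces.
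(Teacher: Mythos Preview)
Your argument is correct and follows the same overall strategy as the paper: pass to the simply connected cover, use the de Rham decomposition together with simplicity of $\so(5)$ to reduce to the irreducible case (ruling out $S^4\times S^4$ along the way), and then run through the list of $8$-dimensional irreducible compact simply connected symmetric spaces, showing $Spin(5)$ cannot act transitively on any of them. Two remarks are worth making. First, your list is redundant: under the exceptional isomorphism $\su(4)\simeq\so(6)$ one has $SU(4)/S(U(2)\times U(2))\cong G_2^+(\mathbb R^6)$, so these are not distinct cases (your two arguments for them are, of course, both valid). Second, the paper shortcuts part of your case analysis by exploiting a feature of the specific metric that you do not use: by construction the leaf of symmetry $G_2^+(\mathbb R^4)\cong S^2\times S^2$ sits in $\tilde M$ as a totally geodesic submanifold of rank $2$, which immediately eliminates every rank-one candidate (in particular $\mathbb HP^2$, $S^8$, $\mathbb CP^4$) without any representation-theoretic analysis of embeddings. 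Your embedding arguments are fine, and the worry you flag about ``exotic'' embeddings is not a real gap---the low-dimensional representation theory of $Sp(2)=Spin(5)$ is small enough that the invariant-subspace arguments you sketch are exhaustive---but the rank observation is a cleaner way to dispatch most of the list.
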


\begin{proof}
  Let us consider the universal covering $\tilde M = Spin(5)/(Spin(2)
  \times Spin(2))$ of $M$, where $Spin(2) \simeq SO(2)$. It is enough
  to prove that $\tilde M$ is not a globally symmetric space. Assume
  that $\tilde M$ is a symmetric space. Recall that, since $\tilde M$
  is compact and simply connected, it cannot have a flat factor. 

  Let us prove first that $\tilde M$ must be irreducible. In fact, let
  $\tilde M = M_1 \times \cdots \times M_k$ be the de Rham
  decomposition of $\tilde M$, where $M_i$ is a compact, simply
  connected, irreducible symmetric space space. Since, $Spin(5)$ is
  simple, projecting down the group $Spin(5) \subset I(\tilde M)$ to
  $I(M_i)$ we get a transitive subgroup of $I(M_i)$ isomorphic to
  $Spin(5)$ (since the kernel of this projection is a normal subgroup
  of $Spin(5)$ and $M_i$ is simply connected). In particular, since
  $\dim \tilde M = 8$, no factor $M_i$ in the decomposition of $\tilde
  M$ can be a symmetric space of the group type. Let us denote by
  $n_i$ the dimension of $M_i$. Since $10 = \dim Spin(5) \le \dim
  I(M_i) \le n_i(n_i + 1)/2$, we conclude that $k = 2$, and $n_1 = n_2
  = 4$. This implies that $\tilde M = S^4 \times S^4$ and $I_0(\tilde
  M) = Spin(5) \times Spin(5)$ (almost effective action). This is a
  contradiction, because no subgroup of $I(\tilde M)$, isomorphic to
  $Spin(5)$ can be transitive in $S^4 \times S^4$. 

  So $\tilde M$ is a simply connected, compact irreducible symmetric
  space which is not of the group type. Thus the only possibilities
  are $\tilde M = G_1^+(\mathbb H^3)$ or $\tilde M = G_2^+(\mathbb
  R^6) = SO(6)/(SO(2) \times SO(4))$. Since we note before that
  $\tilde M$ has a totally geodesic submanifold isometric to
  $G_2^+(\mathbb R^4)$, we can easily exclude the case $\tilde M =
  G_1^+(\mathbb H^3)$, which is a rank one symmetric space. The case
  $\tilde M = G_2^+(\mathbb R^6)$ is also impossible, because
  $Spin(5)$ could not act transitively on $\tilde M$.

  So, $\tilde M$ is not a symmetric space, which concludes the proof of
  the lemma.
\end{proof}

\begin{remark}
  We remark the work of Podestá \cite{podesta-2015} on constructing
  invariant metrics on generalized flag manifold, which applies to the
  homogeneous manifold $SO(5)/(SO(2) \times SO(2))$. He deals with
  Kähler and the leaves of symmetry is always an irreducible Hermitian
  symmetric space. So, our example is different from the ones given by
  Podestá, since in our case the leaf of symmetry is $G_2^+(\mathbb
  R^4) \simeq S^2 \times S^2$. In particular, the metric is not Kähler.
\end{remark}

\subsection{The case of $SO(4)$}

Let us work, for simplicity, in the universal covering group of
$SO(4)$ presented as $SU(2) \times SU(2)$. We present several families
of left invariant metrics on with co-index of symmetry $4$. First of
all, we recall the classification of homogeneous spaces with co-index
of symmetry $2$, which are all left invariant metrics on $SU(2)$ (see
\cite{berndt-olmos-reggiani-2017} or \cite{Reggiani_2018}). Let
us denote by 
\begin{align}
  \label{eq:1}
  X_1 = \frac12
  \begin{pmatrix}
    i & 0 \\
    0 & -i
  \end{pmatrix},
      &&
         X_2 = \frac12
         \begin{pmatrix}
           0 & -1 \\
           1 & 0
         \end{pmatrix},
      &&
         X_3 = \frac12
         \begin{pmatrix}
           0 & -i \\
           -i & 0
         \end{pmatrix}
\end{align}
the standard basis of $\su(2)$. Any left invariant metric on $SU(2)$,
up to isometric automorphism, is represented in the basis \eqref{eq:1}
by the symmetric definite positive matrix
\begin{equation}
  \label{eq:2}
  M(\lambda, \mu, \nu) =
  \begin{pmatrix}
    \lambda & 0 & 0 \\
    0 & \mu & 0 \\
    0 & 0 & \nu
  \end{pmatrix}, \qquad \lambda \ge \mu \ge \nu > 0,
\end{equation}
being the round metric on $SU(2)$ the one with $\lambda = \mu =
\nu$. The left invariant metrics on $SU(2)$ with co-index of symmetry
$2$ are, up to isometry and scaling, the associated with the matrices
$M(\lambda, \lambda - 1, 1)$, with $\lambda > 2$; $M(\lambda, 1, 1)$,
with $\lambda > 1$; and $M(1, 1, \nu)$, with $0 < \nu < 1$. The last
two families parameterise the so-called Berger spheres. Let us denote
by $SU(2)_{\lambda, \mu, \nu}$ the group $SU(2)$ endowed with the left
invariant metric represented by $M(\lambda, \mu, \nu)$.

From the previous comments, one can easily construct a large number of
examples of left invariant metrics on $SU(2) \times SU(2)$ with
co-index of symmetry $4$. Namely, denote by $(\lambda, \mu, \nu)$ one
of the triples $(\lambda, \lambda - 1, 1)$, $(\lambda, 1, 1)$ or $(1,
1, \nu)$ with the restrictions imposed above, and similarly assume
that $(\lambda', \mu', \nu')$ takes the form $(\lambda', \lambda' - 1,
1)$, $(\lambda', 1, 1)$ or $(1, 1, \nu')$. So, one can form six
$2$-parameter families of spaces $SU(2)_{\lambda, \mu, \nu} \times
SU(2)_{\lambda', \mu', \nu'}$ with co-index of symmetry $4$. Note that
these spaces are Riemannian products, but they do not split of a
symmetric de Rham factor and so they satisfies the hypothesis of
Theorem \ref{sec:preliminaries}. 

We present another family of examples, which appears in the
classification of naturally reductive spaces of dimension up to $6$
given by Agricola, Ferreira and Friedrich
\cite{agricola-ferreira-friedrich-2014}, and whose index of symmetry
is computed by using the results in
\cite{olmos-reggiani-tamaru-2014}. We present $SU(2) \times SU(2)$ as
the homogeneous manifold $G/H$ where $G = SU(2) \times SU(2) \times
SU(2)$ modulo the diagonal subgroup $H = \{(g, g, g): g \in
SU(2)\}$. Denote by $\mathfrak g = \su(2) \oplus \su(2) \oplus \su(2)$
and $\mathfrak h = \{(X, X, X): X \in \su(2)\}$ their respective Lie
algebras. Put 
\begin{align*}
  \mathfrak m_1 & = \{\tilde X = (X, aX, bX): X \in \su(2),\, a, b,
                  \in \mathbb R\}, \\
  \mathfrak m_2 & = \{\tilde Y = (Y, cY, dY): Y \in \su(2),\, c, d,
                  \in \mathbb R\}.
\end{align*}
If we ask $0 \neq (a - 1)(d - 1) - (b - 1)(c - 1) = \det
\begin{pmatrix}
  1 & 1 & 1 \\
  1 & a & b \\
  1 & c & d
\end{pmatrix}
$ then $\mathfrak m = \mathfrak m_1 \oplus \mathfrak m_2$ is a reductive
complement of $\mathfrak h$ and, for each $\lambda > 0$, the inner
product on $\mathfrak m$ defined by
\begin{equation*}
  \langle(\tilde X_1, \tilde Y_1), (\tilde X_2, \tilde Y_2)\rangle =
  -\frac12\left(\trace(X_1X_2) + \frac1{\lambda^2}\trace(Y_1Y_2)\right)
\end{equation*}
induces a naturally reductive metric on $G/H$. It is easy to see that
the set of fixed vectors of the isotropy representation is a
$2$-dimensional subspace of $\mathfrak m$. So in the generic case
(when the metric is not symmetric), it follows from
\cite{olmos-reggiani-tamaru-2014} that the co-index of symmetry is
equal to $4$.

\subsection{The case of $SO(3) \times SO(3) \times SO(3)$}

We can form metrics with co-index of symmetry $4$ in $SO(3) \times
SO(3) \times SO(3)$ by taking the product of the bi-invariant
(symmetric) metric on the first factor and one of the metrics
presented in the above case on the others two factors. So, we have a
rank $5$ distribution of symmetry in a $9$-dimensional homogeneous
space. Recall that this example is not exactly in the hypothesis of
Theorem \ref{sec:preliminaries} it has co-index of symmetry $4$
though. Sadly, we have not been able to find an irreducible example yet. 

\appendix
\section{Strongly symmetric autoparallel distributions}
\label{sec:strongly-symm-autop}

We introduce shortly the concept of strongly symmetric
distribution. The full reference for this appendix is the article
\cite{berndt-olmos-reggiani-2017}. Let $M = G/H$ be a compact
homogeneous Riemannian manifold. Assume that $G$ is connected and its
action on $M$ is effective. We say that a $G$-invariant autoparallel
distribution $\mathcal D$ is \emph{strongly symmetric} with respect to
$G$ if every integral manifold $L(p)$ is a globally symmetric space
and for each $v \in \mathcal D_p$ there exists a Killing field $X$ on
$M$, which is induced by $G$, such that $X_p = v$ and $X|_{L(p)}$ is
parallel at $p$.

\begin{example}
  The distribution of symmetry of $M$ is strongly symmetric with
  respect to the full isometry group.
\end{example}

\begin{example}
  [{\cite[Lemma 3.11]{berndt-olmos-reggiani-2017}}]
  If $\mathcal D$ is strongly symmetric with respect to $G$ and
  $\mathcal D'$ is a $G$-invariant autoparallel distribution such that
  $\mathcal D \subset \mathcal D'$ and $\rank\mathcal D' -
  \rank\mathcal D = 1$, then $\mathcal D'$ is strongly symmetric with
  respect to $G$.
\end{example}

Theorem \ref{sec:preliminaries} has a weaker version for strongly
symmetric distributions.

\begin{theorem}
  [{\cite[Theorem 3.7]{berndt-olmos-reggiani-2017}}]
  Let $\mathcal D$ be strongly symmetric with respect to $G$
  and let $k = \dim M - \rank \mathcal D$. Assume that $M$ does not
  have a symmetric de Rham factor with associated parallel
  distribution contained in $D$. Then there exists a transitive
  semisimple Lie group $G' \subset G$ such that $\dim G' \le k(k +
  1)/2$. Moreover, equality holds if and only the Lie algebra of $G'$
  is isomorphic to $\so(k + 1)$.
\end{theorem}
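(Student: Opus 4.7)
The proof will follow the strategy of Theorem \ref{sec:preliminaries}, whose argument for the distribution of symmetry must be adapted to the weaker setting of a strongly symmetric $\mathcal D$. The starting point is to produce a candidate semisimple transitive subgroup $G' \subset G$. Since $M$ is compact homogeneous and we are forbidding symmetric de Rham factors with associated parallel distribution contained in $\mathcal D$, the Lévi–Malcev decomposition of $\mathfrak g$ together with a transitivity argument (exactly as in the proof of Theorem \ref{sec:preliminaries}) shows that $G$ has a semisimple transitive subgroup. Among all such subgroups, I would take $G'$ of minimal dimension; this minimality is what makes the later effectiveness-type arguments work.

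Next, because $\mathcal D$ is $G$-invariant, $G'$ preserves the foliation $\mathcal F$ integrating $\mathcal D$, and therefore induces a homomorphism $\varphi: G' \to \mathrm{Isom}_{\mathrm{loc}}(M/\mathcal F)$ into the local isometry pseudogroup of the transverse $k$-dimensional Riemannian quotient (which is well defined since the leaves are totally geodesic). The crucial claim is that the kernel $K = \ker\varphi$ is discrete. Granting this, $G'/K$ acts almost effectively by isometries on a $k$-dimensional Riemannian manifold, and the classical bound on the dimension of such an isometry group yields $\dim G' \le k(k+1)/2$, with equality occurring exactly when the transverse metric has constant curvature, i.e.\ when $\mathfrak g' \simeq \so(k+1)$.

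The main obstacle is precisely the discreteness of $K$. If $\dim K > 0$, then the Lie algebra $\mathfrak k \subset \mathfrak g'$ consists of Killing fields everywhere tangent to $\mathcal D$. My plan is to combine such fields with those provided by the strong symmetry hypothesis: for every $v \in \mathcal D_p$ we have some $X_v \in \mathfrak g$ with $(X_v)_p = v$ and $X_v|_{L(p)}$ parallel at $p$. Taking brackets of elements of $\mathfrak k$ with the $X_v$, and using that each leaf $L(q)$ is a symmetric space, I expect to produce a nontrivial $G$-invariant autoparallel subdistribution $\mathcal D_0 \subset \mathcal D$ whose tangent space at $p$ is spanned by the values of $\mathfrak k$, and to verify that $\mathcal D_0$ is actually parallel in $M$ (not merely autoparallel). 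The Riemannian de Rham theorem then identifies $\mathcal D_0$ with the tangent distribution of a de Rham factor of $M$; because its leaves inherit the symmetric structure of the leaves of $\mathcal D$, that factor is itself symmetric, contradicting the hypothesis and forcing $\dim K = 0$. This discreteness step is where essentially all of the content of the theorem is concentrated; the dimension count and the equality statement then follow from standard facts about maximally symmetric Riemannian manifolds.
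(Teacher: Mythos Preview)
The present paper does not prove this theorem: it is quoted in the appendix as \cite[Theorem 3.7]{berndt-olmos-reggiani-2017} with no accompanying argument, serving only as background for the application in Theorem~\ref{sec:case-so5so3-1}. There is therefore no ``paper's own proof'' here to compare your attempt against; any genuine comparison would have to be made with the source \cite{berndt-olmos-reggiani-2017}.

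On the merits of your sketch itself, the architecture is plausible and broadly in the spirit of how Theorem~\ref{sec:preliminaries} is obtained, but one step is underjustified. You pass to a ``transverse $k$-dimensional Riemannian quotient'' of the foliation by leaves of $\mathcal D$ and invoke the classical bound $\dim \mathrm{Isom} \le k(k+1)/2$. Autoparallel (totally geodesic) foliations are not automatically Riemannian foliations, so a transverse Riemannian structure, even at the pseudogroup level, is not available for free; you would need to argue separately (using $G$-invariance of both the metric and $\mathcal D$, and transitivity) that the normal metric is bundle-like, or else bypass the quotient entirely and work at the Lie-algebra level by analyzing the map $\mathfrak g' \to (\mathcal D_p)^\perp \oplus \mathfrak{so}((\mathcal D_p)^\perp)$ given by value and skew part of the covariant derivative at $p$. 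Your discreteness step (producing a parallel subdistribution from a positive-dimensional kernel and deriving a symmetric de Rham factor) is the right place to use the strong symmetry hypothesis, but as written it is a plan rather than an argument: you still have to show that the candidate $\mathcal D_0$ is $\nabla$-parallel in $M$, not merely autoparallel, and this is exactly where the Killing fields $X_v$ with $(\nabla X_v)|_{L(p)} = 0$ at $p$ must be put to work explicitly.
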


\bibliography{/home/silvio/Dropbox/math/bibtex/mybib.bib}

\providecommand{\bysame}{\leavevmode\hbox to3em{\hrulefill}\thinspace}
\providecommand{\MR}{\relax\ifhmode\unskip\space\fi MR }
% \MRhref is called by the amsart/book/proc definition of \MR.
\providecommand{\MRhref}[2]{%
  \href{http://www.ams.org/mathscinet-getitem?mr=#1}{#2}
}
\providecommand{\href}[2]{#2}
\begin{thebibliography}{Reg18b}

\bibitem[AFF15]{agricola-ferreira-friedrich-2014}
I.~Agricola, A.~C. Ferreira, and Th. Friedrich, \emph{{The classification of
  naturally reductive homogeneous spaces in dimensions $n\leq 6$}}, {Differ.
  Geom. Appl.} \textbf{39} (2015), 59--92.

\bibitem[BOR17]{berndt-olmos-reggiani-2017}
J.~Berndt, C.~Olmos, and S.~Reggiani, \emph{Compact homogeneous {R}iemannian
  manifolds with low co-index of symmetry}, J. Eur. Math. Soc. (JEMS)
  \textbf{19} (2017), 221--254.

\bibitem[Car26]{cartan-1926-1927}
\'E. Cartan, \emph{Sur une classe remarquable d'spaces de {R}iemann. {I},
  {II}}, Bull. Soc. Math. France \textbf{54} (1926), 214--264, \textbf{55}
  (1927), 114--134.

\bibitem[OR12]{olmos-reggiani-2012}
C.~Olmos and S.~Reggiani, \emph{The skew-torsion holonomy theorem and naturally
  reductive spaces}, J. Reine Angew. Math. \textbf{664} (2012), 29--53.

\bibitem[ORT14]{olmos-reggiani-tamaru-2014}
C.~Olmos, S.~Reggiani, and H.~Tamaru, \emph{The index of symmetry of compact
  naturally reductive spaces}, Math. Z. \textbf{277} (2014), no.~3--4,
  611--628.

\bibitem[Pod15]{podesta-2015}
F.~Podest\'a, \emph{{The index of symmetry of a flag manifold}}, Rev. Mat.
  Iberoam. \textbf{31} (2015), no.~4, 1415--1422.

\bibitem[Reg18a]{reggiani18_distr_symmet_natur_reduc_nilpot_lie_group}
S.~Reggiani, \emph{The distribution of symmetry of a naturally reductive
  nilpotent {Lie} group}, Geom. Dedicata (2018).

\bibitem[Reg18b]{Reggiani_2018}
\bysame, \emph{{The index of symmetry of three-dimensional Lie groups with a
  left-invariant metric}}, Adv. Geom. \textbf{18} (2018), no.~4, 395–404.

\bibitem[Wol68]{wolf-1968}
J.~Wolf, \emph{On the geometry and structure of isotropy irreducible
  homogeneous spaces}, Acta Math. \textbf{120} (1968), 59--148, correction:
  Acta Math.\ \textbf{152} (1984), 141--142.

\end{thebibliography}
\bibliographystyle{amsalpha}
\end{document}